 \newtheorem{theorem}{Theorem}%[section]
\newtheorem{proposition}{Proposition}
\newtheorem{remark}{Remark}
\theoremstyle{definition} % Roman font in body
\def\N{\ensuremath{\mathbb{N}}}
\def\W{\ensuremath{\mathscr{W}}}
\def\E{\ensuremath{\mathscr{E}}}
\def\F{\ensuremath{\mathscr{F}}}
\def\A{\ensuremath{\mathcal{A}}}
\def\G{\ensuremath{\mathcal{G}}}
\def\Zm{\ensuremath{{\mathbb{Z}_-}}}
\def\Ke{\ensuremath{K_e}}
\begin{document}

\title{On $g$-functions for countable state  subshifts}

\author{Adam Jonsson}
\thanks{I am grateful to Wolfgang Krieger for a discussion about subshift presentations.}
 \address{Department of Engineering Sciences and Mathematics \\Lule{\aa} University of Technology, 97187 Lule\aa, Sweden}\email{adam.jonsson@ltu.se} 
 \date{\today}
 \subjclass[2010]{Primary 37B10} 

 \keywords{$g$-functions; subshifts; invariant sets}
\maketitle

\begin{abstract}
This note revisits the problem of finding necessary and sufficient conditions for a subshift to have a continuous $g$-function. Results obtained by Krieger (IMS Lecture Notes-Monograph Series, 48, 306--316, 2006) on finite alphabet subshifts are generalized to countable state subshifts. \end{abstract}

\section{Introduction}\label{sec: intro}
Let $\mathcal{A}$ be a finite or countably infinite set provided with the discrete topology, let $\mathbb{Z}_-=\{0,-1,-2,\dots\}$,  and let $\Theta$ be the shift on $X^{}=\mathcal{A}^{\mathbb{Z}_-}$.  Let $\G$ denote the set of continuous   $g$-functions on $X$.\footnote{The notions of  $g$-function and $g$-measure have their origin in Doeblin and Fortet's \cite{DF37} work on chains of infinite order.  Keane \cite{Kea72}  introduced $g$-functions in ergodic theory. For details and further   references, see, for example, Johansson et al \cite{JOP07} and Stenflo \cite{Ste02, Ste03}.}  That is,  $\mathcal{G}$ is the set of all continuous    $g\colon X\to [0,1]$  with the property that 
\begin{align}\label{eq: sum is one}
\sum_{x'\in \Theta^{-1}(x)}g(x')=1 \text{ for every $x =(\dots{},x_{-1},x_0) \in X^{}$.}
\end{align}
For a subset  $K \subset X^{}$  and  $g \in \mathcal{G}$, we say that  $K$ is $g$-invariant or 
invariant for $g$ if
\begin{equation}\label{def: invariance g}
g(x)=0 \text{ for all $x\in K^c$ such that $\Theta(x) \in K$.}
\end{equation}
 
We consider the following problem:  given a   subset $K \subset X$, are there  $g \in \mathcal{G}$ for which $K$ is invariant?   Gundy \cite[p. 79]{Gun07} raises the special case of this question when $\A=\{0,1\}$ and $K$ is a subshift, that is, a closed   $\Theta$-invariant subset.  Krieger \cite{Kri06} considers the case when $\A$ is finite and gives a   necessary and sufficient condition  for subshifts of the two-sided space   $\mathcal{A}^{\mathbb{Z}}$.  As in \cite{Kri06}, we refer to this condition    as  
\emph{property $G$}. For a subshift  of the one-sided  space, property $G$ can be stated as the requirement that there is no $x\in K$  such that   $(x,{a})\in \overline{\{x\in K^c\colon \Theta(x)\in K\}}$ for every $a \in \A$.\footnote{Here and throughout, $K^c$ and $\overline{K}$ denote complement and closure, respectively.} For a nonempty  subset $K \subset X$, we  let $\Ke$ denote the set $\{x\in K^c\colon \Theta(x)\in K\}$ and we refer to points of $\Ke$ as  points of exit  from $K$. The following results are presented in this note:  In Section \ref{sect: exit and G}  we show that if $\A$ is finite and $K\subset X$ is  a subshift, then $\overline{\Ke}\cap K=\emptyset$ if and only if $K$ is a subshift of finite type    (see Section \ref{sect: Notation} for the definition). From this result we obtain an alternative proof of Krieger's \cite{Kri06} theorem that a  subshift    with a  strictly positive $g$-function  ((\ref{def: invariance g}) holds and $g(x)>0$ for all $x\in K$)     must be of finite type if $\A$  is finite.  
In Section \ref{sect: strict g and strictly g} we consider the case when $\A$ is countably infinite. We   show  that property $G$ remains to be necessary and sufficient for a subshift to be invariant for  some $g\in \G$, but that  subshifts with   strictly positive  $g$-functions need not be of finite type. Here the condition that $\overline{\Ke}$ and $K$ do not intersect is  both necessary and sufficient.

%------- NEW SECTION

\section{Notation}\label{sect: Notation}
The notation in this paper follows \cite{Gun07}. The alphabet $\A$ is assumed   finite or countably infinite, 
%I let 
$X$ denotes  the set $\A^{\mathbb{Z}_-}$ of all sequences $x=(\dots{},x_{-1},x_0)$ of symbols from $\A$.  Let $\W$ denote the collection of all finite words over $\A$  and let $\W_n\subset \W, n\geq 1,$ be the  words of length $n$.  The word $w$   is said to appear in $x \in X$ if  $(x_{i},\dots{},x_{j})=w$ for some $i \leq j \leq 0$.  

The cylinders
$$C(w):=\{x\in X: x=(x',w)\text { for some }x' \in X\},\text{ $w\in \W$,  }$$
give a basis for the product topology (each copy of $\mathcal{A}$ is given the discrete topology) on $X$. We will use the fact that  cylinders are clopen (that is, closed and open), that  $X$ is  metrizable with 
\begin{equation}
\rho(x,x')=\begin{cases}
0 \mbox{ if $x=x'$},\\
2^{-l(x,x')}\text{ if $x\neq x'$, $l(x,x')=\min\{|j|:x_j \neq x_j'\}$},\\
\end{cases}
\end{equation}
and that $X$ is compact if $\A$ is finite. By a subshift of $X$, we mean  a closed nonempty  subset $K \subset X$ with $K=\Theta(K)$, where $\Theta(x)_j{:=}x_{j-1},j\in \Zm$. 
Subshifts are defined by forbidden words  (see  \cite[Chapter 1.2]{LM95}). 
 For  $\mathscr{F}\subset \W$, we let $X_\mathscr{F}$ be the  set of all $x\in X$ in which no word of $\mathscr{F}$ appears. A subshift  is    of  finite type if it can be written $X_\mathscr{F}$ for some   $\mathscr{F}$ containing finitely many words.  

%----- SECTION
 
\section{Exit points  and property $G$}\label{sect: exit and G}

As in \cite{Kri06}, we say that a subshift  $K\subset X$  has a $g$-function if $K$  is invariant for some   $g \in \mathcal{G}$. That is,   $K$  has a $g$-function if there exists a $g\in \mathcal{G}$ with the property that $g(x)=0$ for all $x\in \Ke:=\{x\in K^c\colon \Theta(x)\in K\}$. That $K$ has  a  \emph{strict} $g$-function means that $K$ is invariant for some $g\in \mathcal{G}$ with  $\{x\in X\colon g(x)=0\}=\overline{\Ke}$.  Finally, that $K$ has a \emph{strictly positive} $g$-function means that $K$ is invariant for  some $g\in \mathcal{G}$ such that  $g(x)>0$ for all $x\in K$. Invariance of   arbitrary  subsets of $X$ with respect to $g$-functions is defined in the same way.

Krieger \cite[pp. 306-307]{Kri06} uses the following notation (translated from the two-sided setting) to state his necessary and sufficient condition for a finite alphabet subshift  $K\subset X$ to have a  $g$-function: for $x \in X, a\in \A$, and  $w \in \W$, let 
\begin{align}
 \Gamma^-_\infty(a)&=\{x'\in K \colon (x',a) \in K\},\\
\omega^+_1(w)&=\bigcap_{x' \in \Gamma^-_\infty(a')}\{a' \in \A \colon (x',w,{a'}) \in K\}, \\
\triangle_1^+(x)&=\bigcup_{n\in \N}\omega_1^+(x_{[-n,0]}).
\end{align}
Property $G$ is the condition  that 
\begin{align}\label{def: property g}
\text{  }\triangle_1^+(x) \neq \emptyset \text{ for every $x \in X$}.  
\end{align}
In the present  notation, (\ref{def: property g}) says that
\begin{equation}
\text{there is no $x\in X$ with  $(x,{a}) \in \overline{\Ke}$ for every $a \in \A$. }
\label{eq: disjointness condition for K}
\end{equation}

\begin{remark}\label{rem: def g and G} If $K\subset  X$ is a subshift,  then  (\ref{eq: disjointness condition for K}) is equivalent to that there are  no $x\in K$ with $(x,{a}) \in \overline{\Ke}$ for every $a \in \A$.
\end{remark}
Conditions (\ref{def: property g}) and (\ref{eq: disjointness condition for K})  have  formulations for   general subsets  of $X$ with  $\A$ finite or countably   infinite. The proof of the following result shows that they are easily translated into each other.  

\begin{proposition}\label{prop: equivalence} For a subset  $K \subset X$,   (\ref{def: property g}) is satisfied  if and only if  (\ref{eq: disjointness condition for K}) is satisfied. 
\end{proposition}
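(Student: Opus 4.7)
The plan is to reduce the proposition to the pointwise equivalence
\[
a \in \triangle_1^+(x) \iff (x,a) \notin \overline{\Ke}, \qquad x \in X,\ a \in \A.
\]
Once this is in hand, condition (\ref{def: property g}) reads ``for every $x$ there exists $a$ with $a \in \triangle_1^+(x)$,'' while (\ref{eq: disjointness condition for K}) reads ``for every $x$ there exists $a$ with $(x,a) \notin \overline{\Ke}$,'' and the two are evidently the same assertion.

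To establish the pointwise equivalence I would first invoke the elementary topological fact that the cylinders $C(x_{[-n,0]}, a)$, $n \in \N$, are clopen and form a neighborhood basis at $(x,a)$; consequently $(x,a) \notin \overline{\Ke}$ holds if and only if some such cylinder is disjoint from $\Ke$. Next I would unpack Krieger's definition: the statement $a \in \omega_1^+(x_{[-n,0]})$ asserts that every point of $X$ whose final coordinates are $(x_{-n},\dots,x_0,a)$ and whose image under $\Theta$ lies in $K$ must itself lie in $K$ — in other words, that $C(x_{[-n,0]},a) \cap \Ke = \emptyset$. Taking the union over $n$ then yields the desired equivalence.

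The main obstacle is the bookkeeping in translating $\omega_1^+$, which is written as an intersection over ``pasts'' $x' \in \Gamma^-_\infty$, into the clean cylinder-intersection condition $C(x_{[-n,0]},a) \cap \Ke = \emptyset$. Once that dictionary is in place, both directions are one-line consequences of the clopen basis property. Notably, the argument is purely topological and uses neither compactness of $X$ nor finiteness of $\A$, so the equivalence will hold uniformly for finite and countably infinite alphabets and, as stated, for arbitrary subsets $K \subset X$ rather than just subshifts.
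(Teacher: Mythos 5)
Your proposal is correct and follows essentially the same route as the paper: both proofs establish the pointwise equivalence $a\in\triangle_1^+(x)\iff (x,a)\notin\overline{\Ke}$ by observing that membership of $(x,a)$ in $\overline{\Ke}$ is witnessed on the cylinders $C(x_{[-n,0]},a)$, and that $a\in\omega_1^+(x_{[-n,0]})$ is exactly the statement $C(x_{[-n,0]},a)\cap\Ke=\emptyset$. The paper phrases the cylinder condition as the existence, for every $n$, of some $x'\in K$ with $x'_{[-n,0]}=x_{[-n,0]}$ and $(x',a)\in K^c$, but this is the same dictionary you describe.
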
 
\begin{proof} For   $x\in X$ and ${a}\in \A$,  we have  $(x,{a}) \in \overline{\Ke}$ if and only if $a\notin \triangle_1^+(x)$. (If $(x,{a}) \in \overline{\Ke}$, then there is for every $n\in\N$ some $x' \in K$ with $x'_{[-n,0]}=x_{[-n,0]}$ and $(x',{a}) \in K^c$. This means that $a\notin \omega_1^+(x_{[-n,0]})$ for all  $n\in\N$, so that  $a \notin \triangle_1^+(x)$. Reversing the argument gives that $a\notin \triangle_1^+(x)$ implies $(x,{a}) \in \overline{\Ke}$.) So  $(x,{a}) \in \overline{\Ke}$ for all ${a}\in \A$ if and only if $\triangle_1^+(x)=\emptyset$. Thus  (\ref{def: property g}) is equivalent to that there is no $x\in X$ such  that $(x,{a}) \in \overline{\Ke}$ for every $a\in \A$. 
\end{proof}
 
Subshifts of finite type have property $G$. For a subshift of $\{0,1\}^{\mathbb{Z}_-}$  with property $G$ that is not of finite type, let  $K$ be the set of all $x\in  \{0,1\}^{\mathbb{Z}_-}$ such that between any two 1s there is an even number of 0s (cf. \cite[p. 6]{LM95}).  If $(x,{a}) \in \Ke$, then $a=1$. So there are no $x \in \{0,1\}^{\mathbb{Z}_-}$  for which $(x,0)$ and $(x,1)$ are both in $\overline{\Ke}$. 

Gundy \cite{Gun07} studies  Markov processes $\boldsymbol{x}_{t}, t=0,1,2,\dots$, on the binary sequence space $\{0,1\}^\Zm$.  Given $\boldsymbol{x}_{t}$, the process moves from $\boldsymbol{x}_{t}$ to   $(\boldsymbol{x}_{t},0)$ or  $(\boldsymbol{x}_{t},1)$, where the transition  probabilities  are  $g((\boldsymbol{x}_{t},0))$ and $g((\boldsymbol{x}_{t},1))$, respectively. For a subshift $K\subset \{0,1\}^\Zm$, points of $\Ke$ might be called  points of exit   from  $K$: if the process  leaves  $K$   at  time $t\geq 1$,   then we have $\boldsymbol{x}_t\in K^c$ and  $\Theta(\boldsymbol{x}_t) \in K$.  Paths from initial points in $K$ remain in $K$ with probability one if    $g(x)=0$   for all $x\in \Ke$.\footnote{Equivalently,  $K \subset \{0,1\}^{\mathbb{Z}_-}$ is invariant if $g(x)=1$  for all $x\in K$ with $(\dots,x_{-1},1-x_0) \in K^c$. In   \cite{Gun07}, such  points are called \emph{barrier points} of $K$. If $\A$ has more than two symbols however, then then we may have $g(x)=0$ for all $x\in \Ke$ and $g(x)<1$ for every $x\in X$.}   

If $K\subset X$ is $g$-invariant, then  $g(x)=0$ for all $x\in \overline{\Ke}$. So for $K$  to have a strictly positive $g$-function,  we must have $K \cap \overline{\Ke}=\emptyset$.  
 
\begin{proposition}\label{prop: properties of Ke}
 For $\A$  finite, the following are equivalent for a subshift  $K \subset X$: (i) $K$ is of finite type. (ii) $\Ke$ is closed. (iii) $K \cap \overline{\Ke}=\emptyset$.
\end{proposition}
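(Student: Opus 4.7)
The plan is to close the cycle (i) $\Rightarrow$ (ii) $\Rightarrow$ (iii) $\Rightarrow$ (i). The implication (ii) $\Rightarrow$ (iii) is immediate, since $\Ke \subset K^c$ and $K^c$ is open, so if $\Ke$ is closed then $\overline{\Ke} = \Ke \subset K^c$. For (i) $\Rightarrow$ (ii), suppose $K = X_\F$ for a finite set $\F \subset \W$, and let $U \subset X$ be the set of sequences in which some word of $\F$ appears ending at position $0$. Then $U$ is a finite union of cylinders, hence clopen, and I claim $\Ke = \Theta^{-1}(K) \cap U$: for the nontrivial inclusion, if $\Theta(x) \in K$ and $x \notin K$, any forbidden word in $x$ must end at position $0$, otherwise it would already appear in $\Theta(x)$. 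So $\Ke$ is the intersection of a closed set and a clopen set, and is therefore closed.

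The main work is (iii) $\Rightarrow$ (i), which I would prove contrapositively using the \emph{essential} forbidden words for $K$: the set $E \subset \W$ of words $w \notin L(K)$ all of whose proper subwords lie in $L(K)$, where $L(K)$ denotes the language of $K$. A standard argument yields $K = X_E$, and $K$ is of finite type precisely when $E$ is finite. Assuming $E$ is infinite, and using that $\A$ finite makes each $\W_n$ finite, I pick $w_n \in E$ with $|w_n| \to \infty$. Writing $w_n = v_n a_n$, essentiality gives $v_n \in L(K)$; iterating $K = \Theta(K)$ produces $y^{(n)} \in K$ with $y^{(n)}_{[-|v_n|+1, 0]} = v_n$, and then $z^{(n)} := (y^{(n)}, a_n)$ satisfies $\Theta(z^{(n)}) = y^{(n)} \in K$ while $w_n$ appears in $z^{(n)}$, so $z^{(n)} \in \Ke$.

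Using that $X$ is compact (this is where $\A$ finite is essential), I pass to a convergent subsequence $z^{(n_k)} \to z \in \overline{\Ke}$. The main obstacle is to show $z \in K$: for any fixed $m \geq 1$, once $k$ is large enough that both $|w_{n_k}| > m+1$ and $z^{(n_k)}_{[-m, 0]} = z_{[-m, 0]}$ hold, the word $z_{[-m,0]}$ is a proper suffix of the essential word $w_{n_k}$ and therefore lies in $L(K)$. Hence every finite word appearing in $z$ belongs to $L(K)$, giving $z \in X_E = K$. This places $z$ in $K \cap \overline{\Ke}$, contradicting (iii) and closing the cycle.
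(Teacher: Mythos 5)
Your proof is correct and follows essentially the same route as the paper's: the same decomposition $\Ke=\Theta^{-1}(K)\cap\bigcup_{w\in\mathscr{F}}C(w)$ for (i)$\Rightarrow$(ii), the same trivial (ii)$\Rightarrow$(iii), and for (iii)$\Rightarrow$(i) the same compactness argument producing points of exit ending in arbitrarily long minimal forbidden words. Your use of the essential (first-offender) words $E$ is a slightly more careful variant of the paper's reduction of $\mathscr{F}$ to words containing no other forbidden word as a subword, and it cleanly justifies that the prefixes $v_n$ actually lie in $L(K)$ and that the limit point lies in $K$, steps the paper asserts with less detail.
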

 \begin{remark}\label{rem: Ke and SFT} As the proof shows,  (i) implies (ii) and (iii) also when is $\A$  is infinite. \end{remark}
\begin{proof} Write $K=X_\mathscr{F}$ for some collection of words $\mathscr{F}$. Assume without loss of generality that no word in $\mathscr{F}$ contains another word of $\mathscr{F}$ as a subword, i.e.,  if $w=(w_{-n},\dots,w_0) \in \mathscr{F}$, then    $(w_{-n+1},\dots,w_0)\notin \mathscr{F}$ and  $(w_{-n},\dots,w_{-1})\notin \mathscr{F}$.

\noindent (i) implies (ii): We can write
\begin{align}\label{eq: expression Ke} \Ke=\Theta^{-1}(K) \bigcap (\bigcup_{w \in \mathscr{F}}  C(w)).
\end{align}  
Here  $\Theta^{-1}(K)$ is closed since $K$ is closed and $\Theta$ is continuous.  Since $C(w)$ is closed for every $w \in \mathscr{F}$, $\cup_{w \in \mathscr{F}}  C(w)$ is closed if $\mathscr{F}$ is finite. From this and  (\ref{eq: expression Ke}) it follows that $\Ke$ is closed if  $\mathscr{F}$ is finite. 

\noindent (ii) implies (iii)  
since $\Ke\subset K^c$ by definition. 

\noindent (iii) implies (i): Assume that $K \cap \overline{\Ke}=\emptyset$ and suppose for contradiction that $\mathscr{F}$ is infinite. For  $i >1$,  choose a word $w=(w_{1},\dots{},w_{n_i}) \in \mathscr{F}$ of length $n_i \geq i$. By our assumption on $\mathscr{F}$,   $(w_{2},\dots{},w_{n_i})$ and $(w_{1},\dots{},w_{n_i-1})$ are both admissible (i.e., they appear in   points of $K$).  Take $x(i) \in K,i>1,$ such that $(w_{1},\dots{},w_{n_i-1})$ is the initial word in $x(i)$ of length $n_i-1$. Then  the initial word in $x'(i){:=}(x(i),w_{n_i})$ of length $n_i-1$ is admissible, but the initial word of length $n_i$ is forbidden. That is,   $x'(i) \in \Ke$ and $\rho(x'(i),K) \leq 2^{-n_i+1}$. Since $X$ is  compact,   $x'(i)$ has a convergent subsequence $x'(i_k),k\geq 1$, and since $\rho(x'(i_k),K) \to 0$,  we have $\lim_{k \to \infty}x'(i_k) \in K \cap \overline{\Ke}$. 
\end{proof}
 
%-------   SECTION 

\section{Strict $g$-functions and strictly positive $g$-functions}\label{sect: strict g and strictly g}
We begin by showing that property $G$ remains to be necessary and sufficient for a subset $K\subset X$ to have a $g$-function  if $\A$ is allowed to be  countably infinite.  

 \begin{theorem}\label{thm: strict g for c} For $\A$ finite or  countably infinite, the following are equivalent for a subset $K \subset X$: (i) $K$ has property $G$. (ii)  $K$ has a $g$-function. (iii)   
$K$ has a  strict $g$-function.
\end{theorem}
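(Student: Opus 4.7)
My plan is to prove the implications in the cyclic order (iii) $\Rightarrow$ (ii) $\Rightarrow$ (i) $\Rightarrow$ (iii). The first is immediate from the definitions in Section \ref{sect: exit and G}. For (ii) $\Rightarrow$ (i), I would argue that if $g \in \G$ is a $g$-function for $K$, then $g$ vanishes on $\Ke$ and hence, by continuity, on the larger set $\overline{\Ke}$. If property $G$ failed, there would be some $x \in X$ with $(x,a) \in \overline{\Ke}$ for every $a \in \A$, forcing $\sum_{a \in \A} g((x,a)) = 0$ and contradicting (\ref{eq: sum is one}). Proposition \ref{prop: equivalence} is what lets me phrase property $G$ directly in terms of $\overline{\Ke}$.

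The substantive direction is (i) $\Rightarrow$ (iii), and my approach is to start with a single continuous scalar witness whose zero set is exactly $\overline{\Ke}$, then normalize it fiberwise to produce an element of $\G$. Concretely, set $d(y) = \rho(y, \overline{\Ke})$ (and $d\equiv 1$ in the trivial case $\overline{\Ke}=\emptyset$); this is $1$-Lipschitz with $0 \leq d \leq 1$ (the metric is bounded by $1$) and satisfies $d^{-1}(0) = \overline{\Ke}$. Fix any strictly positive probability distribution $\psi\colon \A \to (0,1]$ with $\sum_{a\in\A}\psi(a)=1$ (e.g.\ $\psi(a_k)=2^{-k}$ under some enumeration), and define
\[
g(y) \;=\; \frac{\psi(y_0)\, d(y)}{\sum_{a \in \A}\psi(a)\, d\bigl((\Theta(y), a)\bigr)}, \qquad y \in X.
\]
Property $G$ guarantees that for every $x \in X$ there is some $a \in \A$ with $(x,a) \notin \overline{\Ke}$, hence with $d((x,a))>0$, so the denominator $D(y) := \sum_{a}\psi(a)\, d((\Theta(y),a))$ is strictly positive on $X$. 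By construction $g \geq 0$, $\sum_{a\in\A} g((x,a)) = 1$ for every $x$, and $g(y)=0$ if and only if $d(y)=0$, i.e.\ $\{g=0\}=\overline{\Ke}$, which is the strictness requirement.

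The main obstacle, and the only place where infinite $\A$ does any real work, is continuity of $g$. For finite $\A$ this is automatic, but for countable $\A$ the denominator $D$ is an infinite sum of continuous functions and need not be continuous in general. This is precisely why the weights $\psi(a)$ are inserted: each summand $y \mapsto \psi(a)\,d((\Theta(y),a))$ is continuous (composition of $\Theta$, the continuous map $x \mapsto (x,a)$, and $d$), and the uniform bound $|\psi(a)\,d((\Theta(y),a))| \leq \psi(a)$ together with $\sum_a \psi(a)=1<\infty$ lets the Weierstrass $M$-test deliver uniform convergence of the partial sums. Hence $D$ is continuous on $X$, and since $D>0$ everywhere, the quotient $g$ is continuous as well, completing the construction.
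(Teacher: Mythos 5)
Your proof is correct, and the substantive direction (i) $\Rightarrow$ (iii) takes a genuinely different route from the paper's. The paper builds $g$ piecewise: it fixes a word length $m$, isolates the cylinders $U(w)$, $w\in\E_m$, whose $m$-blocks meet $\overline{\Ke}$, chooses for each such $w$ a designated ``safe'' symbol $b(w)$ with $w^{\ast,b}\notin\E_m$, sets $g(x)=\lambda_{x_0}\rho(x,\Ke)$ on the other branches of $U(w)$, dumps the leftover mass $1-\sum_{a\neq b}\lambda_a\rho(x^{\ast,a},\Ke)$ onto the $b$-branch, and takes $g(x)=\lambda_{x_0}$ off $U_{\E_m}$. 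You instead normalize fiberwise, $g(y)=\psi(y_0)d(y)\big/\sum_{a}\psi(a)d\bigl((\Theta(y),a)\bigr)$ with $d=\rho(\cdot,\overline{\Ke})$, which is closer in spirit to Krieger's original formula (\ref{def: Krieger's g}) that the paper explicitly sets aside for infinite $\A$; your two modifications --- summable weights $\psi(a)$ and the bounded Lipschitz function $d$ in place of $1/n(x)$ --- are exactly what repair both the possible divergence and the continuity of the denominator (Weierstrass $M$-test), while property $G$, via Proposition \ref{prop: equivalence}, is used only to keep the denominator strictly positive. Your route buys immediacy: the normalization $\sum_a g((x,a))=1$, the bound $g\le 1$, and the identification $\{g=0\}=\overline{\Ke}$ all fall out of the formula, and you avoid the paper's uniformity step of selecting a single $m$ such that every $w\in\E_m$ admits a safe symbol; the paper's piecewise version, in exchange, exhibits the transition mechanism (all deficit mass routed to one escape symbol per cylinder) more explicitly and keeps $g$ locally constant away from $\overline{\Ke}$. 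Your treatment of the easy implications is also fine: (iii) $\Rightarrow$ (ii) is definitional, and your (ii) $\Rightarrow$ (i) argument --- continuity forces $g$ to vanish on $\overline{\Ke}$, so failure of property $G$ would contradict (\ref{eq: sum is one}) --- correctly supplies the step the paper dismisses as obvious.
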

The finite alphabet part of this result is due to Krieger \cite{Kri06}.  Here  is a slight modification of the construction in \cite[p. 310]{Kri06}: for $x \in X$ and $a\in \A$, denote
\begin{align} 
x^{\ast,a}&{=}(\dots,x_{-2},x_{-1},a),\label{def: ast map points}\\ 
n(x)&=\begin{cases} \min\{n\in\N\colon C(x_{[-n,0]}) \subset (\overline{\Ke})^c\} \text{ if }x \in (\overline{\Ke})^c,\\
 \infty \text{ if }x \in \overline{\Ke},\\
 \end{cases}\\
\intertext{and let (here $1/\infty:=0$)  } g(x)&=
\frac{1/n(x)}{\sum_{j \in \A }1/n(x^{\ast,a})}.\label{def: Krieger's g}
\end{align}
Property $G$ assures that for every $x\in X$,   $n(x^{\ast,a})$ is finite for at least one ${a}\in \A$. Thus $g$ is well defined on all of $X$.  That   $\A$ is finite makes $\sum_{j \in \A }1/n(x^{\ast,a})$   finite for   every  $x\in X$, which means that  $\sum_{{a}\in\A}g(x^{\ast,a})=1$ for all  $x\in X$.  
Since  $g(x)=0$ if $x\in\Ke$,    $K$ is  $g$-invariant.

The   function (\ref{def: Krieger's g}) is still well defined if  $\A$ is infinite. But it will not generally be the case that (\ref{eq: sum is one}) holds and there is also the issue of continuity. The construction    below defines  $g(x)$ in terms of  $  \rho(x,\Ke)$.\footnote{This technique is used in \cite{Gun07} to  construct   $g$-functions for a class of  shifts of finite type.}  The $g$-function  is essentially a weighted sum of translates of $\rho(.,\Ke)$,  the partial sums of which  converge uniformly.    

\begin{proof}[Proof of Theorem \ref{thm: strict g for c}]
We only show that (i) implies (iii), the other implications being obvious. 
 
Fix a sequence $\lambda_a,a\in \A,$ of weights with $\lambda_a> 0$ and $\sum_{j \in \A}\lambda_a=1$. For $m>1$, let $\E_m$ be the collection of  $w \in  \W_m$   for which  $C(w)$ contains a point of $\overline{\Ke}$. Given $w=(w_{-m+1},\dots,w_{-1},w_0)\in \W_m$ and $a\in \A$, denote
\begin{align} 
w^{\ast,a}&{=}(w_{-m+1},\dots,w_{-1},a). \label{def: ast map words}
\end{align} 
That $K$ has property $G$ means that we can fix $m$ such that  if  $w\in \E_m$, then  $w^{\ast,b}\notin \E_m$ for some $b=b(w)\in \A$. Given $w \in \E_m$ and  $b=b(w) \in \A$, define $g$ on  
$$U(w){:=}\bigcup_{a\in \A}C(w^{\ast,a})$$
by setting 
\[
g(x)=\begin{cases}
\lambda_{x_0}\rho(x,\Ke) & \text{ if $x_0\neq b$, i.e., if $x\in \bigcup_{a\in \A\backslash\{b\}}C(w^{\ast,a})$},\\
1- \sum_{a \in \A\backslash\{b\}}\lambda_a\rho(x^{\ast,a},\Ke) &\text{ if $x_0=b$, i.e., if $x\in C(w^{\ast,b})$}.
 \end{cases}
\] 
If $w,w'\in \E_m$,  then we either have $U(w)=U(w')$ or $U(w)\cap U(w')=\emptyset$. Thus
\begin{align}\label{eq: Ue}
U_{\E_m}:=\bigcup_{w\in \E_m}U(w)
\end{align}
is a  disjoint union of sets $U(w), w \in \E_m$, which means that $g$ is well defined on $U_{\E_m}$. If $x$ is in $(U_{\E_m})^c=\cup_{w \in \W_m\backslash \E_m}U(w)$, then so is $ x^{\ast,a}$ for every ${a}\in \A$. So if we define  $g(x)=\lambda_{x_0}$ for $x\in(U_{\E_m})^c$, then we have $\sum_{{a}\in \A}g(x^{\ast,a})=1$ for all $x\in X$. 

It remains to show: (I) $g$ is continuous, (II) $g\geq 0$,  (III) $\{x\in X:g(x)=0\}=\overline{\Ke}$.

\noindent I: That $g$ is continuous on $C(w)$ for each $w$ in (\ref{eq: Ue}) follows from that $\rho(x,\Ke)$  is continuous and that  
the partial sums of $ \sum_{a \in \A\backslash\{b\}}\lambda_a\rho(x^{\ast,a},\Ke) $ converge uniformly. Since    $C(w)$ is open for each $w \in \E_m$, $g$ is continuous on $U(w)$. Since each  
 $U(w)$ is open, $g$ is continuous on  $U_{\E_m}$. Since  $(U_{\E_m})^c$ is  open, $g$ is continuous on $X$.

\noindent II: That $g\geq 0$ follows  from that $\rho \geq 0$,  $\lambda_a\geq 0$,  and $\sum_{a \in \A}\lambda_a=1$.

\noindent III:  If $x \in \overline{\Ke}$, then $x \in C(w)$ for some  $w \in \E_m$. Since $C(w^{\ast,b}) \cap \overline{\Ke}=\emptyset$, we must have $x\in \cup_{a\in \A\backslash\{b\}}C(w^{\ast,a})$,  
so that $g(x)=\lambda_{x_0}\rho(x,\Ke)=0$. Conversely, if   $g(x)=0$, then it cannot be the case that $g(x)=1- \sum_{a \in \A\backslash\{b\}}\lambda_a\rho(x^{\ast,a},\Ke)$, because $\lambda_b>0$ and $\rho \leq 1$. Thus  $g(x)=\lambda_{x_0}\rho(x,\Ke)$. Since  $\lambda_{x_0}>0$, we must have $\rho(x,\Ke)=0$ and therefore  $x\in \overline{\Ke}$.
\end{proof}

From   Theorem \ref{thm: strict g for c} we see that a subshift $K\subset X$ with property $G$ has a strictly positive $g$-function if  $K\cap \overline{\Ke}=\emptyset$. The latter condition   implies property $G$ and is therefore both necessary and sufficient:   

\begin{proposition}\label{thm: strictly g for c}  
For $\A$   finite or countably infinite, the following are equivalent for a subshift $K\subset X$: (i) $K$ has a strictly positive $g$-function. (ii) $K\cap \overline{\Ke}=\emptyset$.  (iii)  $\Ke$ is closed. 
\end{proposition}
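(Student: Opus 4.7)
The plan is to prove the cycle (i) $\Rightarrow$ (ii) $\Leftrightarrow$ (iii) and then close the loop via (ii) $\Rightarrow$ (i), leaning on Theorem \ref{thm: strict g for c} for the last and most substantial step. The equivalence of (ii) and (iii) will be essentially topological, while (ii) $\Rightarrow$ (i) will be a corollary of Krieger-style construction once property $G$ is verified.

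For (i) $\Rightarrow$ (ii), I would observe that if $K$ is $g$-invariant then $g$ vanishes on $\Ke$ and, by continuity of $g$, also on $\overline{\Ke}$. Since strict positivity forces $g > 0$ on $K$, the sets $K$ and $\overline{\Ke}$ must be disjoint. For (iii) $\Rightarrow$ (ii), I would note that $\Ke \subset K^c$ by definition, so closedness of $\Ke$ gives $\overline{\Ke} = \Ke \subset K^c$. For (ii) $\Rightarrow$ (iii), I would take $x \in \overline{\Ke}$ and approximate it by $x^{(n)} \in \Ke$; since $\Theta$ is continuous and $K$ is closed, $\Theta(x) = \lim \Theta(x^{(n)}) \in K$. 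Combined with $x \notin K$ (which is where (ii) is used), this gives $x \in \Ke$, so $\Ke$ is closed.

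The remaining implication (ii) $\Rightarrow$ (i) is the main step, and I would deduce it from Theorem \ref{thm: strict g for c}. To apply that theorem I first need property $G$, and this is where I expect the only real obstacle: showing that $K \cap \overline{\Ke} = \emptyset$ forces property $G$ for a subshift. The argument I have in mind is a proof by contradiction: if some $x \in X$ satisfies $(x,a) \in \overline{\Ke}$ for all $a \in \A$, then taking any single $a$ and approximating $(x,a)$ by elements of $\Ke$ gives $\Theta(x,a) = x \in K$ by continuity of $\Theta$ and closedness of $K$. Now using $K = \Theta(K)$ I can find some $a_0 \in \A$ with $(x,a_0) \in K$, and this point lies simultaneously in $K$ and $\overline{\Ke}$, contradicting (ii).

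Once property $G$ is in hand, Theorem \ref{thm: strict g for c} produces a strict $g$-function $g$ with $\{x \in X : g(x) = 0\} = \overline{\Ke}$. Since (ii) guarantees $K \cap \overline{\Ke} = \emptyset$, this $g$ is strictly positive on $K$, establishing (i) and closing the loop. The use of the subshift property $K = \Theta(K)$ in the contradiction argument above is essential: without shift-invariance the implication $K \cap \overline{\Ke} = \emptyset \Rightarrow$ property $G$ can fail, and this is the only place where the hypothesis that $K$ is a subshift (rather than an arbitrary subset) really enters.
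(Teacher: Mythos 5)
Your proposal is correct and follows essentially the same route as the paper: the topological argument for the equivalence of (ii) and (iii) via continuity of $\Theta$ and closedness of $K$, the use of $K=\Theta(K)$ to extract property $G$, and the appeal to Theorem \ref{thm: strict g for c} to produce the strictly positive $g$-function. The only cosmetic difference is that you close the cycle at (ii) rather than at (iii) as the paper does, which changes nothing of substance.
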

\begin{proof} 
\noindent (i) implies (ii) since $g(x)=0$ for all $x\in  \overline{\Ke}$ if $K$ is $g$-invariant.  

\noindent (ii) implies (iii):  Suppose that  $\Ke$ is not closed. We show that  the boundary of $\Ke$ lies in $K$. Take $x\in (\Ke)^c$ on the boundary of $\Ke$ and a sequence  $x^n$ in $\Ke$ with $x^n \to x$.   That  $x^n$ is in $\Ke$  means that $\Theta(x^n)\in K$. Since $\Theta$ is continuous and $K$ is closed,  $\Theta(x^n) \to  \Theta(x)\in K$. Because $x$ was selected from the   complement of $\Ke$, we must have $x\in K$.

\noindent (iii) implies (i): Assume that  $\Ke$ is  closed. Property $G$ then  says that there are no $x\in X$ with $(x,a) \in \Ke$ for every $a\in \A$. Since $(x,a) \notin \Ke$ if $x\in K^c$, this amounts to the condition that there are no $x\in K$ with $(x,a) \in \Ke$ for every $a\in \A$, which  is another way of saying  that $K \subset \Theta(K)$.  Since $K= \Theta(K)$, we  can   conclude that $K$ has  property $G$.  By   Theorem \ref{thm: strict g for c} there exists a $g\in \G$ with $\{x\in X\colon g(x)=0\}=\overline{\Ke}$.   Because $\Ke$ is closed,  $\{x\in X\colon g(x)=0\}= \Ke\subset K$. 
\end{proof}
  
If $\A$ is countably infinite, then   it is still true that $K\cap \overline{\Ke}=\emptyset$  if $K$ is a subshift of finite type (see Remark \ref{rem: Ke and SFT}). To see that the reverse implication need not hold, let  $K$ be defined by proscribing all  but one of the  symbols of $\A$.  Then $K\cap \overline{\Ke}=\emptyset$, but $K$ is not of finite type. More generally, if $\F$ is infinite but the length of every  $w\in\F$ is less than $M$ for some $M<\infty$, then the distance between $K=X_\mathscr{F}$ and $\Ke$ is at least $2^{-M}$. (Ott et al \cite{OTW14} refer to such subshifts  as $M$-step shifts.) If no such $M$ exists there is a sequence $x^n$ in $\Ke$ with $\rho(x^n,K) \to 0$, but  since $X$ is not compact, the sequence may not  have a limit point. For example, if no two words of $\F$  have a symbol in common, then $K\cap \overline{\Ke}=\emptyset$ holds even if $\F$ contains arbitrarily long words.  
%\bibliographystyle{alpha}
%\bibliography{QMFreferences}
%\end{document}

 % \section*{Acknowledgement}  I am indebted to  Wolfgang Krieger for information about $g$-function presentations.

%%%%%%%%%%%%%%%%%%%%%%%%%%%%%%%%%%%%%%%%%%%%%%%%%%%%%%%%%%%%%%%%%%%
%%                                                               %%
%% Use the two commands below for producing your bibliography    %%
%% with bibtex, then comment again the commands and include the  %%
%% content of the .bbl file in this file below the commands.     %%
%%                                                               %%
%%%%%%%%%%%%%%%%%%%%%%%%%%%%%%%%%%%%%%%%%%%%%%%%%%%%%%%%%%%%%%%%%%%
%\bibliographystyle{alpha}
%\bibliographystyle{alpha}
%\bibliography{QMFreferences}
%\end{document}
% 

\end{document}